\newcommand{\RR}{\mathbb{R}}
\newcommand{\NN}{\mathbb{N}}
\newcommand{\cZ}{{\cal Z}}
\newcommand{\cC}{{\cal C}}
\newcommand{\cK}{{\cal K}}
\newcommand{\cF}{{\cal F}}
\newcommand{\cP}{{\cal P}}
\newcommand{\mC}{{\mathbb C}}
\newcommand{\mR}{{\mathbb R}}
\newcommand{\mU}{{\mathbb U}}
\newcommand{\bG}{{\mathbf G}}
\newcommand{\bs}{{\mathbf s}}
\newcommand{\bF}{{\mathbf F}}
\newcommand{\bff}{{\mathbf f}}
\newcommand{\bw}{{\mathbf w}}
\newcommand{\bC}{{\boldsymbol{\mathcal C}}}
\newcommand{\bz}{{\mathbf z}}
\newtheorem{theorem}{Theorem}
\newtheorem{definition}{Definition}
\newtheorem{remark}{Remark}
\newtheorem{proposition}{Proposition}
\newcommand{\norm}[2]{\ifthenelse{\equal{#2}{}}{\left\|\cdot\right\|_{#1}}{\left\|#2\right\|_{#1}}}
\newcommand{\seminorm}[2]{\ifthenelse{\equal{#2}{}}{\left|\cdot\right|_{#1}}{\left|#2\right|_{#1}}}
\DeclareMathOperator*{\argmin}{argmin}
\DeclareMathOperator*{\Cov}{Cov}
\begin{document}
\title{Kernel Methods for the Approximation of the Eigenfunctions of the Koopman Operator}

\author[1]{Jonghyeon Lee}
\author[1,2]{Boumediene Hamzi}
\author[3]{Boya Hou}
\author[1]{Houman Owhadi}
\author[4]{Gabriele Santin}
\author[5]{Umesh Vaidya}

\affil[1]{Department of Computing and Mathematical Sciences, Caltech, CA, USA.}
\affil[2]{Alan Turing Institute, London, UK.}
\affil[3]{Carl R. Woese Institute for Genomic Biology, University of Illinois, Urbana-Champaign, IL, USA.}
\affil[4]{Department of Environmental Sciences, Informatics and Statistics, Ca’ Foscari University of Venice, Italy. }
\affil[5]{Department of Mechanical Engineering, Clemson University, USA. }

\maketitle

\begin{abstract}

The Koopman operator provides a linear framework to study nonlinear dynamical systems. Its spectra offer valuable insights into system dynamics, but the operator can exhibit both discrete and continuous spectra, complicating direct computations. In this paper, we introduce a kernel-based method to construct the principal eigenfunctions of the Koopman operator without explicitly computing the operator itself. These principal eigenfunctions are associated with the equilibrium dynamics, and their eigenvalues match those of the linearization of the nonlinear system at the equilibrium point. We exploit the structure of the principal eigenfunctions by decomposing them into linear and nonlinear components. The linear part corresponds to the left eigenvector of the system's linearization at the equilibrium, while the nonlinear part is obtained by solving a partial differential equation (PDE) using kernel methods. Our approach avoids common issues such as spectral pollution and spurious eigenvalues, which can arise in previous methods. We demonstrate the effectiveness of our algorithm through numerical examples.

\end{abstract}
\section{Introduction}

Time series data, which are ubiquitous in various scientific domains, have driven the development of a wide range of statistical and machine learning forecasting methods \cite{kantz97,CASDAGLI1989, yk1, yk2, yk3, yk4, survey_kf_ann,Sindy,jaideep1,nielsen2019practical,abarbanel2012analysis, pillonetto2011new,wang2011predicting,brunton2016discovering,lusch2018deep,callaham2021learning,kaptanoglu2021physicsconstrained,kutz2022parsimony}. 
Dynamical systems theory provides tools to understand the underlying rules governing the evolution of time series data.
Originating with the work by Koopman \cite{koopman1932dynamical}, Koopman operator theory provides a linear lens to study nonlinear dynamical systems. In particular, the Koopman operator maps the nonlinear evolution of finite-dimensional state space to an infinite-dimensional but linear description of functions. Following the seminal work \cite{mezic2005spectral}, there is a surge in interest in using the Koopman operator to study dynamical systems.
As a linear operator, the spectral of the Koopman operator is rich in information and plays a significant role in various analysis and synthesis problems.
For example, the principal eigenfunctions reveal the state space geometry as the joint zero-level curves characterize the stable and unstable manifolds of the equilibrium points \cite{mauroy2016global,umathe2023spectral}. 
In \cite{matavalam2024data}, the principal eigenfunctions are used to identify the stability boundary, thereby determining the domain of attraction of an equilibrium point. Furthermore, the solution to the optimal control problem can also be constructed using the principal eigenfunctions of the Koopman operator, per \cite{vaidya2022spectral,VaidyaKHJ2024}. In \cite{deka2023path}, the authors proposed a path-integral formula for the computation of Koopman principal eigenfunctions. 
Yet in general, the Koopman operator can have both discrete and continuous spectra \cite{mezic2005spectral}, making the computation challenging. In addition, approximating the spectra of an infinite-dimensional operator via a finite-dimensional matrix can suffer from ''spectral pollution'' \cite{colbrook2020foundations}. In this paper, we aim to develop a numerical method to construct the principal eigenfunction of the Koopman operator directly from data that circumvents these difficulties.

Numerical methods such as extended dynamic mode decomposition (EDMD) \cite{williamsEDMD} aim to approximate the infinite-dimensional Koopman operator via its actions on the finite-dimensional space spaced by a set of pre-selected functions. 
Recent research has also explored the combination of kernel methods and Koopman operator theory \cite{klus2020data,peherstorfer2020koopman,klus2015kernel,DAS2020573,hou2023sparse,ishikawa2024koopmanoperatorsintrinsicobservables}.
Building upon the theory of reproducing kernel Hilbert spaces (RKHS) \cite{CuckerandSmale}, kernel-based methods offer considerable advantages in regularization, guaranteed convergence, automatization, and interpretability \cite{chen2021solving, batlle2025error, houman_cgc, bourdais2024codiscovering}, and have notably strengthened the mathematical basis for analyzing dynamical systems \cite{yk1, bhcm11, bhcm1, lyap_bh, BHPhysicaD, hamzi2019kernel, bh2020b, klus2020data, ALEXANDER2020132520, bhks, bh12, bh17, hb17, mmd_kernels_bh, bh_kfs_p5,bh_kfs_p6,hou2024propagating,cole_hopf_poincare,kernel_sos} as well as surrogate modeling \cite{santinhaasdonk19}.
In this paper, we explore the use of kernel methods for the computation of spectral properties of the Koopman operator. The advantage of using kernel methods in this context is that they provide with error estimates that make the method more rigorous.

The existing literature on kernel-based Koopman learning, e.g. \cite{klus2020data,hou2023sparse}, requires constructing a Koopman operator from data first. The spectra are then computed from the learned Koopman operator.  
Yet the spurious eigenvalues \cite{colbrook2024limits} can arise due to numerical artifacts, noise, or overfitting in the computation. These spurious eigenvalues do not represent genuine modes of the underlying dynamics and are misleading when analyzing the system's behavior. To deal with this challenge, we aim to directly learn the spectrum of the Koopman operator without computing the Koopman operator itself by solving a partial differential equation (PDE) in RKHS. Specifically, following \cite{deka2023path}, we associate the principal eigenfunctions of the Koopman operator with an equilibrium point with the corresponding eigenvalues being the same as that of the linearization of the nonlinear system at said equilibrium point. The linear parts of the principal eigenfunction correspond to the eigenvector of local linearization at the origin, while the nonlinear part of the eigenfunction is the solution of PDEs, which can be solved in RKHS. Unlike \cite{bevanda2024koopman} that learns eigenfunctions of the Koopman operator in RKHS, the decomposition into linear and nonlinear parts allows us to incorporate knowledge from the linearization of system dynamics, and exploit the possible nonlinear structure of RKHS.

The remainder of this paper is organized as follows: In Section 2, we provide background on the Koopman operator and its eigenfunctions. In Section 3, we discuss kernel methods and their application to solving the PDE satisfied by the Koopman eigenfunctions. Section 4 presents error estimates for our method, and in Section 5, we showcase numerical experiments.


\section{Preliminaries on the Koopman Operator}

\subsection{Koopman Operator and its Spectrum}
In this section, we provide a brief overview of the spectral theory of the Koopman operator.
We refer the reader to \cite{mezic2020spectrum,mezic2021koopman} for more details. Consider the dynamical system
\begin{align}
    \dot x={f}(x),\label{odesys}
\end{align}
defined on a state space ${\cal Z}\subseteq \mR^p$.  The vector field $\bff$ is assumed to be smooth function. Let $\cF\subseteq \cC^0$ be the function space of observable $\psi: \cZ\to \mC$.
We have following definitions for the Koopman operator and its spectrum. 
\begin{definition}[Koopman Operator] The family of Koopman  operators $\mathbb{U}_t:\cF\to \cF$ corresponding to ~\eqref{odesys} is defined as 
\begin{eqnarray}[\mathbb{U}_t \psi](x)=\psi(s_t(x)). \label{koopman_operator}
\end{eqnarray}
where $s_t(x)$ is the solution of the dynamical system (\ref{odesys}). 
If in addition $\psi$ is continuously differentiable, then $\varphi(x,t):=[\mU_t \psi ](x)$ satisfies a partial differential equation \cite{Lasota} 
\begin{align}
\frac{\partial \varphi}{\partial t}=\frac{\partial \varphi}{\partial x} f=: \cK_f \varphi \label{Koopmanpde}
\end{align}
with the initial condition $\varphi(x,0)=\psi(x)$. The operator $\cK_\bff$ is the infinitesimal generator of $\mU_t$ i.e.,
\begin{eqnarray}
{\cal K}_{f} \psi=\lim_{t\to 0}\frac{(\mathbb{U}_t-I)\psi}{t}. \label{K_generator}
\end{eqnarray}
\end{definition}
It is easy to check that each $\mU_t$ is a linear operator on the space of functions, $\cF$.  
\begin{definition}\label{definition_koopmanspectrum}[Eigenvalues and Eigenfunctions of Koopman] A function $\psi_\lambda(x)$, assumed to be at least $\cC^1$,  is said to be an eigenfunction of the Koopman operator associated with eigenvalue $\lambda$ if
\begin{eqnarray}
[\mU_t \psi_\lambda](x)=e^{\lambda t}\psi_\lambda(x)\label{eig_koopman}.
\end{eqnarray}
Using the Koopman generator, the (\ref{eig_koopman}) can be written as 
\begin{align}
    \frac{\partial \psi_\lambda}{\partial x}{f}=\lambda \psi_\lambda\label{eig_koopmang}.
\end{align}
\end{definition}
The eigenfunctions and eigenvalues of the Koopman operator enjoy the following property \cite{mezic2020spectrum,budivsic2012applied}. 
In this paper, we are interested in approximating the eigenfunctions of the Koopman operator with associated eigenvalues, the same as that of the linearization of the nonlinear system at the equilibrium point. With the hyperbolicity assumption on the equilibrium point of the system (\ref{odesys}), this part of the spectrum of interest to us is  well-defined. In the following discussion, we summarize the results from \cite{mezic2020spectrum} relevant to this paper and justify some of the claims made above on the spectrum of the Koopman operator. 

Equations (\ref{eig_koopman}) and (\ref{eig_koopmang}) provide a general definition of the Koopman spectrum. However, the spectrum can be defined over finite time or over a subset of the state space. The spectrum of interest to us in this paper could be well-defined over the subset of the state space. 
\begin{definition}[Open Eigenfunction \cite{mezic2020spectrum}]\label{definition_openeigenfunction}
Let $\psi_\lambda: \bC\to \mC$, where $\bC\subset \cZ$ is not an invariant set. Let $x\in  \bC$, and
$\tau \in (\tau^-(x),\tau^+(x))= I_x$, a connected open interval such that $\tau (x) \in \bC$ for all  $\tau \in I_x$.
If
\begin{align}[\mU_\tau \psi_\lambda](x) = \psi_\lambda(\bs_\tau(x)) =e^{\lambda \tau}  \psi_\lambda (x),\;\;\;\;\forall \tau \in I_x. 
\end{align}
Then $\psi_\lambda(x)$ is called the open eigenfunction of the Koopman operator family $\mU_t$, for $t\in \mR$ with eigenvalue $\lambda$. 
\end{definition}
If $\bC$ is a proper invariant subset of $\cZ$ in which case $I_x=\mR$ for every $x\in \bC$, then $\psi_\lambda$ is called the subdomain eigenfunction. If $\bC=\cZ$ then $\psi_\lambda$ will be the ordinary eigenfunction associated with eigenvalue $\lambda$ as defined in (\ref{eig_koopman}). The open eigenfunctions as defined above can be extended from $\bC$ to a larger reachable set when $\bC$ is open based on the construction procedure outlined in  \cite[Definition 5.2, Lemma 5.1]{mezic2020spectrum}. Let $\cP$ be that larger domain. The eigenvalues of the linearization of the system dynamics at the origin, i.e., $E$, will form the eigenvalues of the Koopman operator \cite[Proposition 5.8]{mezic2020spectrum}. Our interest will be in constructing the corresponding eigenfunctions, defined over the domain $\cP$. We will refer to these eigenfunctions as {\it principal eigenfunctions} \cite{mezic2020spectrum}. \\
The principal eigenfunctions can be used as a change of coordinates in the linear representation of a nonlinear system and draw a connection to the famous Hartman-Grobman theorem  on linearization and Poincare normal form \cite{arnold2012geometrical}. 
The principal eigenfunctions will be defined over a proper subset $\cP$ of the state space $\cZ$ (called subdomain eigenfunctions) or over the entire $\cZ$ \cite[Lemma 5.1, Corollary 5.1, 5.2, and 5.8]{mezic2020spectrum}. \\

\subsection{Decomposition of Koopman Eigenfunctions}



Following \cite{deka2023path}, we decompose principle eigenfunctions into linear and nonlinear parts. Specifically, consider the decomposition of the nonlinear system into linear and nonlinear parts as
 \begin{equation}
 \dot x=f(x)=Ex+(f(x)-E x)=:E x+ G(x)\label{sys_decompose}.
 \end{equation}
where $E=\frac{\partial f}{\partial x}(0)$ with $Ex$ the linear part and $G$ the purely nonlinear part. 
For the simplicity of presentation and continuity of notations, we present approximation results for eigenfunctions with simple real eigenvalues; the extension to the complex case is deferred to future work. Let $\lambda$ be the eigenvalues of the Koopman generator and also of $E$. The eigenfunction corresponding to eigenvalue $\lambda$ admits the decomposition into linear and nonlinear parts.
\begin{eqnarray}
 \phi_\lambda(x)=w^\top x+h(x) \label{eigen_decompose},
\end{eqnarray}
where $ w^\top x$ and $h(x)$ are the eigenfunction's linear and purely nonlinear parts, respectively. Substituting (\ref{eigen_decompose}) in following general expression of Koopman eigenfunction i.e.,
\begin{eqnarray}
 \frac{\partial \phi_\lambda}{\partial x}(x)
 \cdot
 f(x)=\lambda \phi_{\lambda}(x)
\end{eqnarray}
and using (\ref{sys_decompose}), we obtain following equations to be satisfied by $w$ and $h(x)$
\begin{eqnarray}
w^\top E=\lambda  w^\top,\;\;\frac{\partial h}{\partial x}(x) \cdot f(x)-\lambda h(x)+w^\top G(x)=0
\label{linear_nonlinear_eig}.
 \end{eqnarray}
So, the linear part of the eigenfunction can be found as the left eigenvector with eigenvalue $\lambda$ of the matrix $E$, and the nonlinear term satisfies the linear partial differential equation.
\section{RKHS-based Computational Framework}

\subsection{Reproducing Kernel Hilbert Spaces (RKHS)}

We give a brief overview of reproducing kernel Hilbert spaces as used in statistical learning
theory ~\cite{CuckerandSmale}. Early work developing
the theory of RKHS was undertaken by N. Aronszajn~\cite{aronszajn50reproducing}.

\begin{definition} Let  ${\mathcal H}$  be a Hilbert space of functions on a set ${\mathcal X}$.
Denote by $\langle f, g \rangle$ the inner product on ${\mathcal H}$   and let $\|f\|= \langle f, f \rangle^{1/2}$
be the norm in ${\mathcal H}$, for $f$ and $g \in {\mathcal H}$. We say that ${\mathcal H}$ is a reproducing kernel
Hilbert space (RKHS) if there exists a function $k:{\mathcal X} \times {\mathcal X} \rightarrow \RR$
such that\\
\begin{itemize}
 \item[i.] $k_x:=k(x,\cdot)\in{\mathcal{H}}$ for all $x\in{\mathcal{H}}$.
\item[ii.] $k$ spans ${\mathcal H}$: ${\mathcal H}=\overline{\mbox{span}\{k_x~|~x \in {\mathcal X}\}}$.
 \item[iii.] $k$ has the {\em reproducing property}:
$\forall f \in {\mathcal H}$, $f(x)=\langle f,k_x \rangle$.
\end{itemize}
$k$ will be called a reproducing kernel of ${\mathcal H}$. ${\mathcal H}_k$  will denote the RKHS ${\mathcal H}$
with reproducing kernel $k$ where it is convenient to explicitly note this dependence.
\end{definition}

The important properties of reproducing kernels are summarized in the following proposition.
\begin{proposition}\label{prop1} \cite{aronszajn50reproducing}
If $k$ is a reproducing kernel of a Hilbert space ${\mathcal H}$, then\\
\begin{itemize}
\item[i.] $k(x,y)$ is unique.
\item[ii.]  $\forall x,y \in {\mathcal X}$, $k(x,y)=k(y,x)$ (symmetry).
\item[iii.] $\sum_{i,j=1}^q\alpha_i\alpha_j k(x_i,x_j) \ge 0$ for $\alpha_i \in \RR$, $x_i \in {\mathcal X}$ and $q\in\mathbb{N}_+$
(positive definiteness).
\item[iv.] $\langle k(x,\cdot),k(y,\cdot) \rangle=K(x,y)$.
\end{itemize}
\end{proposition}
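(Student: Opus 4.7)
The plan is to prove the four properties in an order that lets later parts reuse earlier ones, since (ii)--(iv) all fall out of the reproducing property applied in clever ways, while (i) is a standalone separation argument.

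First I would establish uniqueness (i) by a standard argument: suppose $k$ and $\tilde k$ are both reproducing kernels for $\mathcal{H}$. For any fixed $x\in\mathcal{X}$, both $k_x$ and $\tilde k_x$ lie in $\mathcal{H}$ and satisfy $f(x)=\langle f,k_x\rangle=\langle f,\tilde k_x\rangle$ for every $f\in\mathcal{H}$. Hence $\langle f,k_x-\tilde k_x\rangle=0$ for all $f$; choosing $f=k_x-\tilde k_x$ and using positive-definiteness of the inner product forces $k_x=\tilde k_x$ in $\mathcal{H}$, so $k(x,y)=\tilde k(x,y)$ pointwise.

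Next I would dispatch (iv) immediately, since it is essentially just the reproducing property with a specific choice of $f$. Taking $f=k_y=k(y,\cdot)\in\mathcal{H}$ (allowed by condition (i) in the definition of RKHS) and evaluating at $x$ via the reproducing property gives $k(y,x)=\langle k_y,k_x\rangle=\langle k(y,\cdot),k(x,\cdot)\rangle$. Symmetry (ii) then follows by combining this identity with the Hermitian/symmetric character of the Hilbert-space inner product: $k(x,y)=\langle k_x,k_y\rangle=\langle k_y,k_x\rangle=k(y,x)$ (for a real RKHS as stated here).

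Finally, for positive definiteness (iii), I would expand the quadratic form using (iv). For any $\alpha_1,\dots,\alpha_q\in\mathbb{R}$ and $x_1,\dots,x_q\in\mathcal{X}$,
\begin{equation*}
\sum_{i,j=1}^{q}\alpha_i\alpha_j\,k(x_i,x_j)=\sum_{i,j=1}^{q}\alpha_i\alpha_j\,\langle k_{x_i},k_{x_j}\rangle=\Bigl\|\sum_{i=1}^{q}\alpha_i\,k_{x_i}\Bigr\|^{2}\ge 0,
\end{equation*}
where the linearity of the inner product lets us pull the sums outside and the norm non-negativity closes the argument.

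There is no genuine obstacle here; the whole proposition is a direct unpacking of the reproducing property and Hilbert-space axioms. The only point deserving care is the order: (iv) should be proved before (ii) and (iii), because both symmetry and the Gram-matrix identity rely on rewriting $k(x,y)$ as $\langle k_x,k_y\rangle$. A minor subtlety to flag is that the statement is formulated for a real RKHS (kernel valued in $\mathbb{R}$, scalars in $\mathbb{R}$); in the complex case (ii) would become Hermitian symmetry $k(x,y)=\overline{k(y,x)}$ and the proof of (iii) would use $|\cdot|^2$ and conjugates, but the structure of the argument is unchanged.
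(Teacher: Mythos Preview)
Your proof is correct and follows the standard argument; indeed it is essentially the one in Aronszajn's original paper. Note that the paper under review does not actually supply its own proof of this proposition---it merely states the result with a citation to \cite{aronszajn50reproducing}---so there is no alternative approach here to compare against.
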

Common examples of reproducing kernels defined on a compact domain $\mathcal{X} \subset \mathrm{R}^n$ are the 
(1) constant kernel: $K(x,y)= m > 0$
(2) linear kernel: $k(x,y)=x\cdot y$
(3) polynomial kernel: $k(x,y)=(1+x\cdot y)^d$ for $d \in \NN_+$
(4) Laplace kernel: $k(x,y)=e^{-||x-y||_2/\sigma^2}$, with $\sigma >0$
(5)  Gaussian kernel: $k(x,y)=e^{-||x-y||^2_2/\sigma^2}$, with $\sigma >0$
(6) triangular kernel: $k(x,y)=\max \{0,1-\frac{||x-y||_2^2}{\sigma} \}$, with $\sigma >0$.
(7) locally periodic kernel: $k(x,y)=\sigma^2 e^{-2 \frac{ \sin^2(\pi ||x-y||_2/p)}{\ell^2}}e^{-\frac{||x-y||_2^2}{2 \ell^2}}$, with $\sigma, \ell, p >0$.

\begin{theorem} \label{thm1} \cite{aronszajn50reproducing}
Let $k:{\mathcal X} \times {\mathcal X} \rightarrow \RR$ be a symmetric and positive definite function. Then there
exists a Hilbert space of functions ${\mathcal H}$ defined on ${\mathcal X}$   admitting $k$ as a reproducing Kernel.
Conversely, let  ${\mathcal H}$ be a Hilbert space of functions $f: {\mathcal X} \rightarrow \RR$ satisfying
$\forall x \in {\mathcal X}, \exists \kappa_x>0,$ such that $|f(x)| \le \kappa_x \|f\|_{\mathcal H},
\quad \forall f \in {\mathcal H}. $
Then ${\mathcal H}$ has a reproducing kernel $k$.
\end{theorem}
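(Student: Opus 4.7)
The plan is to prove both implications separately, following the classical Moore--Aronszajn construction. For the forward direction, I would first form the pre-Hilbert space $\mathcal{H}_0 := \mathrm{span}\{k_x : x \in \mathcal{X}\}$, where $k_x := k(x,\cdot)$, and equip it with the bilinear form $\langle \sum_i \alpha_i k_{x_i}, \sum_j \beta_j k_{y_j} \rangle := \sum_{i,j} \alpha_i \beta_j k(x_i,y_j)$. The first step is to check that this form is well-defined (independent of the chosen representation), which becomes immediate once one observes the identity $\langle f, k_y \rangle = f(y)$ on $\mathcal{H}_0$ by direct computation. Symmetry reduces to symmetry of $k$ (Proposition~\ref{prop1}(ii)), while non-negativity is exactly the positive-definiteness assumed in Proposition~\ref{prop1}(iii). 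To upgrade positive semidefiniteness to a genuine inner product, I would combine the reproducing identity with Cauchy--Schwarz to obtain $|f(x)|^2 \le k(x,x)\,\|f\|_{\mathcal{H}_0}^2$, so that $\|f\|_{\mathcal{H}_0} = 0$ forces $f \equiv 0$.

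Next I would complete $\mathcal{H}_0$ in the induced norm and realize this completion as a space of honest pointwise functions on $\mathcal{X}$. The inequality $|f_n(x) - f_m(x)| \le \sqrt{k(x,x)}\,\|f_n - f_m\|$ shows that every Cauchy sequence in $\mathcal{H}_0$ is pointwise Cauchy, so a pointwise limit exists at each $x$. I would verify that equivalent Cauchy sequences produce the same pointwise limit and that the map sending an equivalence class to its pointwise limit is injective, yielding the realization of the completion $\mathcal{H}$ as a function space in which $\mathcal{H}_0$ is dense. Properties (i)--(iii) of the RKHS definition then follow by construction, with the reproducing property extending from $\mathcal{H}_0$ to all of $\mathcal{H}$ by continuity of the evaluation functional.

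For the converse, the argument is a direct application of the Riesz representation theorem: the bounded-evaluation hypothesis states that $f \mapsto f(x)$ is a continuous linear functional on $\mathcal{H}$ for each $x \in \mathcal{X}$, so Riesz supplies a unique $k_x \in \mathcal{H}$ with $f(x) = \langle f, k_x \rangle$ for all $f \in \mathcal{H}$. Setting $k(x,y) := k_x(y) = \langle k_x, k_y \rangle$, the membership $k_x \in \mathcal{H}$, symmetry, and the reproducing property are immediate, and the span/density condition is obtained by noting that any $f$ orthogonal to every $k_x$ satisfies $f(x) = \langle f, k_x \rangle = 0$ for every $x$ and hence vanishes identically.

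The main obstacle is the completion step in the forward direction: one must ensure that the abstract metric-space completion of $\mathcal{H}_0$ can be identified with a space of pointwise-defined functions, rather than merely an abstract quotient of Cauchy sequences, and that distinct equivalence classes correspond to distinct functions. The uniform estimate $|f(x)| \le \sqrt{k(x,x)}\,\|f\|$ is the key technical ingredient that makes this identification go through, and verifying consistency of the pointwise-limit assignment across equivalent Cauchy sequences is the most delicate bookkeeping in the proof.
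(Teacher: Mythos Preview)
Your proposal is correct and follows the classical Moore--Aronszajn construction, which is precisely the argument in the reference \cite{aronszajn50reproducing} that the paper cites. Note, however, that the paper itself does not supply a proof of this theorem: it is stated as a standard background result with attribution to Aronszajn, so there is no in-paper proof to compare against beyond the citation.
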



\begin{theorem}\label{thm4} \cite{aronszajn50reproducing}
 Let $k(x,y)$ be a positive definite kernel on a compact domain or a manifold $X$. Then there exists a Hilbert
space $\mathcal{F}$  and a function $\Phi: X \rightarrow \mathcal{F}$ such that
$$k(x,y)= \langle \Phi(x), \Phi(y) \rangle_{\mathcal{F}} \quad \mbox{for} \quad x,y \in X.$$
 $\Phi$ is called a feature map, and $\mathcal{F}$ a feature space\footnote{The dimension of the feature space can be infinite, for example in the case of the Gaussian kernel.}.
\end{theorem}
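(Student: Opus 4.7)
The plan is to exhibit $\mathcal{F}$ and $\Phi$ explicitly by taking the RKHS of $k$ itself as the feature space and the canonical assignment $x \mapsto k(\cdot, x)$ as the feature map. Concretely, I would set $\mathcal{F} := \mathcal{H}_k$ and define $\Phi: X \to \mathcal{H}_k$ by $\Phi(x) := k_x = k(\cdot, x)$. The existence of $\mathcal{H}_k$ is not something I need to re-prove: since $k$ is symmetric and positive definite by hypothesis, Theorem \ref{thm1} already guarantees a Hilbert space of functions $\mathcal{H}$ admitting $k$ as a reproducing kernel, and property (i) of the RKHS definition ensures $k_x \in \mathcal{H}_k$ for every $x \in X$, so $\Phi$ is well-defined as a map into $\mathcal{F}$.

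With these choices in hand, the feature-space identity reduces to a direct application of the machinery already cited. For any $x, y \in X$ one computes
\begin{equation*}
\langle \Phi(x), \Phi(y) \rangle_{\mathcal{F}} = \langle k(x, \cdot), k(y, \cdot) \rangle_{\mathcal{H}_k} = k(x, y),
\end{equation*}
where the final equality is precisely property (iv) of Proposition \ref{prop1}. This is the entire content of the theorem.

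The main subtlety, rather than a true obstacle, is hidden inside the invocation of Theorem \ref{thm1}: constructing $\mathcal{H}_k$ in the first place requires the Moore--Aronszajn procedure of starting from the pre-Hilbert space $\operatorname{span}\{k_x : x \in X\}$ equipped with the bilinear form defined by $\langle k_x, k_y \rangle := k(x,y)$, using positive definiteness of $k$ to show this form is a genuine inner product (after quotienting out the null space, if needed), and completing to obtain a Hilbert space in which the reproducing property holds. Since that construction is assumed available through Theorem \ref{thm1}, the present statement is essentially a corollary. I would close by remarking that the pair $(\mathcal{F}, \Phi)$ is far from unique: any Hilbert space isometric to $\mathcal{H}_k$ together with the transported map furnishes another valid feature representation, and for Mercer kernels one obtains an alternative $\ell^2$-valued feature map from the eigenexpansion of the integral operator associated with $k$.
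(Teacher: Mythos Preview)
Your argument is correct and is the canonical construction: take $\mathcal{F}=\mathcal{H}_k$ and $\Phi(x)=k_x$, then invoke Proposition~\ref{prop1}(iv). The paper, however, does not supply its own proof of this statement; it simply cites \cite{aronszajn50reproducing} and moves on, so there is nothing to compare against beyond noting that your proof is exactly the standard one underlying that citation.
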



Theorem \ref{thm4} is often referred to as the ``kernel trick'', and its utility lies in the fact that the kernel function obviates the need to compute high-dimensional outputs of the feature map $\Phi$ directly.

\begin{theorem}\label{thm5} \cite{owhadi_scovel_2019}
Let $k: X \times X \rightarrow \mathbb{R}$ be a real-valued kernel and $K$ be the associated RKHS of functions mapping $X$ to $\mathbb{R}$. Let $\Phi = (\Phi_1,\dots,\Phi_n) \in (K^*)^n$ be a vector of linear functionals from $K$ to $\mathbb{R}$, and write $\Phi (u) = ([\Phi_1,u],\dots,[\Phi_n,u]) \in \mathbb{R}^n$, where $[\cdot,\cdot]$ is the dual pairing. Then for $y \in \mathbb{R}^n$, 

\begin{equation}
\argmin_{u \in \mathcal{H}_K \textrm{s.t.} \Phi(u) = y} ||u||_K = \mathbb{E}[\xi|\Phi(\xi)=y].
\end{equation}

Here, $\xi$ is a centered Gaussian process with covariance $K: \mathcal{B} \rightarrow \mathcal{B}^*$, where $\mathcal{B}$ is a separable Banach space, which is a linear bijection that is symmetric ($[\phi,K \varphi]=[\varphi, K \phi]$) and positive ($[\phi,K\phi]>0$ for $\phi \neq 0$). 
\end{theorem}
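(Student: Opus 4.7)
The plan is to reduce both sides of the identity to the same closed-form linear estimator and invoke a Riesz--Loève correspondence between the RKHS and the Gaussian process. Let $\psi_i \in \mathcal{H}_K$ denote the Riesz representer of the bounded functional $\Phi_i$, so that $[\Phi_i, u] = \langle \psi_i, u\rangle_K$ for every $u \in \mathcal{H}_K$; concretely $\psi_i(\cdot) = [\Phi_i, k(\cdot, \cdot)]$, with $\Phi_i$ acting on the second argument of $k$. Write $\Theta \in \mathbb{R}^{n\times n}$ for the Gram matrix $\Theta_{ij} = \langle \psi_i, \psi_j\rangle_K$ and $\psi(x) = (\psi_1(x), \dots, \psi_n(x))^\top$.

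For the left-hand side I would apply the standard orthogonal-splitting argument underlying the representer theorem. Any feasible $u$ decomposes as $u = \sum_{i=1}^n c_i \psi_i + u^\perp$ with $u^\perp$ orthogonal to $\mathrm{span}\{\psi_1, \dots, \psi_n\}$; the constraint $\Phi(u) = y$ depends only on $c$, giving $\Theta c = y$, while $\|u\|_K^2 = c^\top \Theta c + \|u^\perp\|_K^2$. Minimizing forces $u^\perp = 0$, so the unique minimizer is $u^\star(x) = \psi(x)^\top \Theta^{-1} y$, when $\Theta$ is invertible.

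For the right-hand side I would use the closed-form Gaussian conditioning identity. Since $\xi$ is centered Gaussian with covariance operator $K$, the pair $(\xi(x), \Phi(\xi))$ is jointly Gaussian with $\Cov(\Phi_i(\xi), \Phi_j(\xi)) = [\Phi_i, K \Phi_j] = \Theta_{ij}$ and $\Cov(\xi(x), \Phi_i(\xi)) = \psi_i(x)$. The standard conditional-expectation formula for jointly Gaussian vectors then yields $\mathbb{E}[\xi(x) \mid \Phi(\xi) = y] = \psi(x)^\top \Theta^{-1} y$, coinciding with $u^\star$.

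The main obstacle is the functional-analytic setup implicit in the statement, rather than the algebraic identity. One must construct a centered Gaussian element $\xi$ on the separable Banach space $\mathcal{B}$ whose covariance operator $K : \mathcal{B} \to \mathcal{B}^*$ matches the kernel, verify that each $\Phi_i(\xi)$ is an honest Gaussian random variable with the second moments computed above, and establish the isometric identification between the span of the $\psi_i$'s in $\mathcal{H}_K$ and the span of the $\Phi_i(\xi)$'s in $L^2(\Omega)$ — the Loève isometry that underlies the equivalence. Once this correspondence is in hand, the two closed-form expressions above give the result; degenerate or singular $\Theta$ is handled by passing to the Moore--Penrose pseudoinverse and interpreting both sides as orthogonal projections onto the appropriate closed subspaces.
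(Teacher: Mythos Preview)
Your argument is correct. The paper does not actually prove Theorem~\ref{thm5} in the text; it states the result with a citation and then sketches only the companion Theorem~\ref{thm6}, namely the closed-form representer formula for the conditional expectation. That sketch is your right-hand-side computation, but carried out in the abstract dual-pairing formulation: instead of conditioning the scalar $\xi(x)$ on $\Phi(\xi)$ for a fixed $x$, the paper pairs $\xi$ with an arbitrary $\psi\in\mathcal{B}^*$, determines the coefficients $c_i$ by requiring that $[\psi,\xi]-\sum_i c_i[\Phi_i,\xi]$ be independent of each $[\Phi_j,\xi]$, solves $\Theta c = ([\psi,T\Phi_j])_j$, and then removes $\psi$ by duality to obtain the $\mathcal{B}$-valued conditional mean $\sum_{i,j}[\Phi_i,\xi]\,\Theta^{-1}_{ij}\,T\Phi_j$. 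Your pointwise version is equivalent; the dual-pairing version is simply the natural one when $\xi$ is treated as a $\mathcal{B}$-valued random element rather than a process indexed by $x$. What you supply and the paper omits is the left-hand side: the orthogonal-splitting argument showing that the constrained argmin equals $\psi(\cdot)^\top\Theta^{-1}y$. So your proposal is in fact more complete than what appears in the paper, and on the one overlapping step the two arguments proceed along the same lines.
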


Theorem \ref{thm5} is significant because it states that the problem of recovering a sufficiently regular function $u$ with respect to constraints $\Phi(u) = y$ is equivalent to finding the conditional expectation of $\xi$, the Gaussian process approximation of $u$, given observations $\Phi(\xi) = y$. Fortunately, (\ref{thm5}) has a closed-form solution given in the theorem below:

\begin{theorem}\label{thm6} \cite{owhadi_scovel_2019}
The conditional expectation of a Gaussian process $\xi$ in $\mathcal{B}$ given observations $\Phi (u)$ is given by the following representer formula:

\begin{equation}
    \mathbb{E}[\xi|\Phi(u)=y] = \sum^m_{i,j = 1} [\Phi_i, \xi] \Theta^{-1}_{i,j} T \Phi_j \in \mathcal{B}
\end{equation}
\end{theorem}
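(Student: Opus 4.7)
The plan is to reduce the statement to the classical Gaussian conditioning formula, leveraging that $\xi$ is a centered Gaussian random element of $\mathcal{B}$ and each $\Phi_j$ is a continuous linear functional, so that the pair $(\xi,\Phi(\xi))\in\mathcal{B}\times\mathbb{R}^m$ is jointly Gaussian with covariance structure fully determined by $K$.

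First I would record the two relevant second-order moments. By the defining property $[\phi, K\varphi]=\mathbb{E}\bigl[[\phi,\xi]\,[\varphi,\xi]\bigr]$ of the covariance operator, the Gram matrix $\Theta\in\mathbb{R}^{m\times m}$ with entries $\Theta_{i,j}=[\Phi_i, K\Phi_j]$ is exactly $\operatorname{Cov}(\Phi(\xi))$; by the positivity of $K$ and linear independence of the $\Phi_j$'s, $\Theta$ is invertible. Testing against any $\phi\in\mathcal{B}^*$ gives $\mathbb{E}\bigl[[\phi,\xi]\,[\Phi_j,\xi]\bigr]=[\phi, K\Phi_j]$, so the $\mathcal{B}$-valued cross-covariance between $\xi$ and the scalar observation $[\Phi_j,\xi]$ is the element $K\Phi_j\in\mathcal{B}$. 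I read the symbol $T\Phi_j$ appearing in the statement as this canonical image of $\Phi_j$ under the covariance.

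Next I would propose the explicit candidate $\widehat\xi := \sum_{i,j=1}^{m}[\Phi_i,\xi]\,\Theta^{-1}_{i,j}\,K\Phi_j \in \mathcal{B}$ and verify, via the orthogonality principle, that it coincides with $\mathbb{E}[\xi\mid\Phi(\xi)]$. By construction $\widehat\xi$ is $\sigma(\Phi(\xi))$-measurable, being a fixed linear combination of the observations. For the orthogonality of the residual, a short computation using the symmetry of $\Theta$ yields, for every $\phi\in\mathcal{B}^*$ and every $k$,
\begin{equation*}
\mathbb{E}\bigl[[\phi,\xi-\widehat\xi]\,[\Phi_k,\xi]\bigr]=[\phi, K\Phi_k]-\sum_{i,j}\Theta^{-1}_{i,j}[\phi, K\Phi_j]\,\Theta_{i,k}=[\phi, K\Phi_k]-[\phi, K\Phi_k]=0,
\end{equation*}
since $\sum_{i}\Theta^{-1}_{i,j}\Theta_{i,k}=\delta_{j,k}$. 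Joint Gaussianity then upgrades this weak uncorrelation to independence of $\xi-\widehat\xi$ from $\Phi(\xi)$, which together with measurability yields $\widehat\xi=\mathbb{E}[\xi\mid\Phi(\xi)]$; substituting the observed value $[\Phi_i,\xi]=y_i$ recovers the asserted representer formula.

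The main obstacle is justifying the Gaussian conditional expectation in the infinite-dimensional Banach setting: one must ensure that $\mathbb{E}[\xi\mid\Phi(\xi)]$ exists as a genuine $\mathcal{B}$-valued random element (not merely weakly), that testing against arbitrary $\phi\in\mathcal{B}^*$ suffices to identify it, and that the Gaussian linearity of the conditional mean carries over. The cleanest route is to exploit that the candidate $\widehat\xi$ already lies in the finite-dimensional subspace $\operatorname{span}\{K\Phi_1,\ldots,K\Phi_m\}\subset\mathcal{B}$; on this subspace the Banach-space difficulties collapse to ordinary finite-dimensional Gaussian conditioning, so the weak orthogonality identity above lifts immediately to a strong $\mathcal{B}$-valued identity, and the variational characterization provided by Theorem~\ref{thm5} confirms that this orthogonal projection is the desired conditional expectation.
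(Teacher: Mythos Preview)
Your proposal is correct and follows essentially the same approach as the paper: both arguments identify the conditional expectation via the orthogonality principle, computing the covariance structure $\Theta_{i,j}=[\Phi_i,T\Phi_j]$ and the cross-covariances $[\phi,T\Phi_j]$, and then using Gaussianity to pass from zero covariance of the residual with $\Phi(\xi)$ to the conditional-expectation identity tested against arbitrary $\phi\in\mathcal{B}^*$. The only cosmetic difference is that the paper solves forward for the coefficients $c_i$ of $\mathbb{E}[[\psi,\xi]\mid\Phi(\xi)]$ and then recognizes the answer as $[\psi,\cdot]$ applied to a fixed $\mathcal{B}$-element, whereas you posit the $\mathcal{B}$-valued candidate $\widehat\xi$ first and verify orthogonality afterward.
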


We outline the key steps in proving Theorem (\ref{thm6}) and refer interested readers to \cite{owhadi_scovel_2019} for details. 
Consider the conditional expectation of $[\xi,\psi]$ given $\Phi(u)$, where $\psi$ is an arbitrary element of the dual Banach space $\mathcal{B}^*$. Since $[\xi,\psi] \in \mathbb{R}$ is a real-valued Gaussian variable, we may use results from classical probability theory to show that $\mathbb{E}[[\psi, \xi] |\Phi (u)] = \sum^m_{i=1} c_i [\Phi_i,\xi]$, where $c_i$ are coefficients that satisfy $[\psi,\xi] - \sum^m_{i=1} c_i [\Phi_i,\xi]$ are independent from $[\Phi_j,\xi]$ $\forall j$. This in turn implies that $\Cov[[\psi,\xi]-\sum^m_{i=1} c_i [\Phi_i,\xi],[\Phi_j,\xi]]=0 \Leftrightarrow [\psi,T\Phi_j]- \sum^m_{i=1} c_i [\Phi_i, T\Phi_j] = 0$. Now define $\Theta$ to be an $m \times m$ matrix with entries $\Theta_{i,j} = [\Phi_i,T\Phi_j]$, which is equivalent to stating that $[\psi,T\Phi_j] = \sum^m_{i=1} c_i \Theta_{i,j}$. If $\Theta$ is invertible, $c_i = \sum^m_{j=1} \Theta^{-1}_{i,j} [\psi,T \Phi_j]$. Therefore, we have
\begin{align}
\mathbb{E}[[\psi,\xi]|[\Phi_1,\xi],\dots,[\Phi_m,\xi]] 
= \sum^m_{i,j=1} [\Phi_i,\xi] \Theta^{-1}_{i,j} \underbrace{T \Phi_j}_{\in \mathcal{B}} 
= [\psi,\sum^m_{i,j=1} [\Phi_i,\xi] \Theta^{-1}_{i,j} T \Phi_j].
\end{align}
Since this is true for all $\psi \in \mathcal{B}^*$, we conclude that
\begin{equation}\label{generalized_representer}
    \mathbb{E}[\xi|[\Phi_1,\xi],\dots,[\Phi_m,\xi]] = \sum^m_{i,j=1} [\Phi_i,\xi] \Theta^{-1}_{i,j} T\Phi_j \in \mathcal{B}.
\end{equation}

In the next section, we write the generalized representer formula (\ref{generalized_representer}) in the equivalent form

\begin{equation}
u (\cdot) =  K(\cdot,\tilde{\phi})K(\tilde{\phi},\tilde{\phi})^{-1} Y, 
\end{equation}

where $K(\tilde{\phi},\tilde{\phi}) = \Theta$, $Y_i = [\tilde{\phi}_i,\xi]$ and $K(\cdot,\tilde{\phi}_i) = T\tilde{\phi}_i $

\subsection{Solving the linear PDE from data}

Solving the linear PDE \eqref{linear_nonlinear_eig} can be framed as a quadratic optimization problem which can be solved by kernel regression. For each eigenvalue $\lambda_k, \ k=1,\dots,n$ , define the best Gaussian approximation of $h_{\lambda_k}$, where $h_{\lambda_k}$ is a function of $d$ variables, to be the $h^*_{\lambda_k}$ that satisfies

\begin{equation}
\begin{aligned}
\min_{h^*_{\lambda_k} \in \mathcal{H}_K} \quad & ||h^*_{\lambda_k}||^2_K \\
\textrm{s.t.} \quad & h^*_{\lambda_k} (\textbf{0})  = 0 \\
\quad & \frac{\partial}{\partial z_j} h^*_{\lambda_k} (\textbf{0}) = 0, \; j = 1, \dots, d \\
\quad &  \frac{\partial h^*_{\lambda_k}}{\partial \bz}(\bz_i) \cdot \bF(\bz_i) -\lambda_k h^*_{\lambda_k} (\bz_i)= -\bw^T_k \bG(\bz_i),  \; i = 1, \dots, N\\
\end{aligned}
\label{optproblem}
\end{equation}
where $\bz_i : = (z_{1,i},z_{2,i},\dots,z_{d,i})$ denotes our collocation points.

Define
\begin{align}
\begin{aligned}
\tilde{\phi}_1 (\bz)=& \delta_{\textbf{0}} (\bz), \\
\tilde{\phi}_{1+i} (\bz) = & \delta_{\textbf{0}} (\bz) \cdot \frac{\partial}{\partial z_i}, \; i=1,\dots,d\\
\tilde{\phi}_{1+d+i} (\bz) =& \sum^{d}_{j=1} F(\bz_i)_j \delta_{\bz_i} (\bz)\cdot  \frac{\partial}{\partial z_j} - \lambda_k \delta_{\bz_i}(\bz) , \; i=1,\dots,N,
\end{aligned},
\end{align}
where $\delta_{\bz_i}(\bz)$ is the Dirac delta distribution centered at $\bz_i$.

Then the optimization problem \eqref{optproblem} has an explicit solution given by the representer formula

\begin{equation}\label{eq:h_represenation}
    h^*_k (\bz) = K(\bz,\tilde{\phi})(K(\tilde{\phi},\tilde{\phi})+\eta I)^{-1} Y,
\end{equation}
where $K(\bz,\tilde{\phi})$ is a vector of length $N+d+1$ with entries 
\begin{align}
K(\bz,\tilde{\phi})_i := [K(\bz,\cdot),\phi_i]
=\int_{\mathbb{R}^{d}} K(\bz,\bz') \tilde{\phi}_i (\bz') d\bz',   
\end{align}
and $K(\tilde{\phi},\tilde{\phi}) $ is a $(N+d+1)\times (N+d+1)$ matrix with entries
\begin{align}
K(\tilde{\phi}_i,\tilde{\phi}_j) := \int_{\mathbb{R}^{2d}} \tilde{\phi}_i(\bz)K(\bz,\bz') \tilde{\phi}_j(\bz') d\bz d\bz', 
\end{align}
and $Y \in \mathbb{R}^{N+d+1}$ is a vector with entries $Y_1 =\dots = Y_{d+1}= 0$ and $Y_{d+1+i}  = -\bG(\bz_i)^T\bw_k, \; i = 1,\dots,N$. $\eta$ is a small positive regularization constant that reduces numerical errors associated with inverting the matrix $K(\tilde{\phi},\tilde{\phi})$.

\section{Error Estimates}
\begin{theorem}[Validity of Stability and Smoothness Assumptions]\label{theorem_stability_validity}
    Let the equilibrium point \( x_e \) of the ODE \eqref{odesys} be hyperbolic. Assume that $f\in C^m(\Omega, \RR^d)$ for some integer \( m \geq 1 \), where \( \Omega \) is an open neighborhood around \( x_e \).
    
    Let \(\mathbf{w} \in \mathbb{R}^d\) be a fixed weight vector. Consider the partial differential equation (PDE)
    \begin{equation}
    D h(x) := \nabla h(x) \cdot f(x) - \lambda h(x) = -\mathbf{w}^\top G(x), \quad x \in \Omega, \label{linear_nonlinear_eig2}
    \end{equation}
    with boundary conditions
    \begin{equation}
    h(x_e) = 0, \quad \nabla h(x_e) = 0. \label{boundary_conditions}
    \end{equation}

    Then, the solution \( h(x) \) satisfies:
    \begin{enumerate}
        \item \( h(x) \in W^{m+1}_2(\Omega) \), i.e., \( h(x) \) belongs to the Sobolev space \( W^{m+1}_2(\Omega) \).
        \item There exists a stability bound of the form
        \begin{equation}
        \| h \|_{L_p(\Omega)} \leq C_D \| D h \|_{L_q(\Omega)} + C_0 |h(x_e)| + C^{\prime}_0 \| \nabla h(x_e) \|_{\ell^r}, \label{stability_bound}
        \end{equation}
                where \( C_D, C_0, C^{\prime}_0 > 0 \) are constants, and \( p, q, r \in [1, \infty] \).
       
    \end{enumerate}
\end{theorem}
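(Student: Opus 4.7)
The plan is to solve the linear first-order transport PDE \eqref{linear_nonlinear_eig2} by the method of characteristics, using the flow $s_t$ of $f$ and the hyperbolicity of $x_e$ to control the long-time behavior along trajectories. First, by linearity of $D$, I pass to the auxiliary unknown
\begin{equation}
    \tilde h(x) := h(x) - h(x_e) - \nabla h(x_e)\cdot(x - x_e),
\end{equation}
which satisfies $\tilde h(x_e) = 0$, $\nabla \tilde h(x_e) = 0$, and, by a direct computation,
\begin{equation}\label{eq:Dtilde}
    D\tilde h(x) = Dh(x) + \lambda\, h(x_e) + \lambda\, \nabla h(x_e)\cdot(x-x_e) - \nabla h(x_e)\cdot f(x).
\end{equation}
Along a trajectory of $\dot x = f(x)$ one has $\frac{d}{ds}\bigl[e^{-\lambda s}\tilde h(s_s(x))\bigr] = e^{-\lambda s}D\tilde h(s_s(x))$. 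Combining this with the local quadratic vanishing $|\tilde h(y)| = O(\|y - x_e\|^2)$ and the exponential rate $\|s_s(x) - x_e\| \le Ce^{-\mu s}$ guaranteed by hyperbolicity (on the stable branch; the unstable branch is handled by time reversal), the boundary term $e^{-\lambda T}\tilde h(s_T(x))$ vanishes as $T\to\infty$, yielding
\begin{equation}\label{eq:hrep}
    \tilde h(x) = -\int_0^{\infty} e^{-\lambda s}\, D\tilde h(s_s(x))\, ds.
\end{equation}

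For Part~1, applying \eqref{eq:hrep} with $D\tilde h$ expressed through $-w^\top G$ and the affine boundary data, the integrand is a $C^m$ function of $x$: since $f\in C^m(\Omega,\RR^d)$, standard ODE theory gives $s_s \in C^m(\Omega)$ with derivatives controlled by Gr\"onwall estimates, and $G = f - Ex \in C^m$. Differentiating under the integral up to order $m$ (justified by exponential decay in $s$) produces $h \in C^m(\Omega)$; the missing $(m{+}1)$-st order regularity in $L_2$ is then recovered from the PDE relation $\nabla h \cdot f = \lambda h - w^\top G$, whose right-hand side is now in $C^m$, via standard first-order transport regularity on the bounded $\Omega$.

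For Part~2, I take the $L_p(\Omega)$ norm of \eqref{eq:hrep} and apply Minkowski's integral inequality and a change of variables $y = s_s(x)$. The Jacobian $|\det \partial_x s_s|$ is controlled uniformly in $s \ge 0$ on any precompact subset by Liouville's formula, and the image $s_s(\Omega)$ stays inside a fixed bounded $\Omega' \supset \Omega$, so the exponential factor $e^{-\operatorname{Re}(\lambda) s}$ renders the outer $s$-integral absolutely convergent, giving $\|\tilde h\|_{L_p(\Omega)} \le C\|D\tilde h\|_{L_q(\Omega')}$ with, say, $p = q$. Reinserting \eqref{eq:Dtilde} and the decomposition $h = \tilde h + h(x_e) + \nabla h(x_e)\cdot(x - x_e)$, and bounding the extra terms on the bounded set $\Omega$ by $|h(x_e)|\cdot|\Omega|^{1/p}$ and by a constant times $\|\nabla h(x_e)\|_{\ell^r}$ (using $\|x - x_e\|$ and $\|f\|_\infty$ bounded on $\Omega$), yields \eqref{stability_bound}.

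The main obstacle I foresee is a compatibility condition between $\lambda$ and the local contraction rate $\mu$ at $x_e$: for \eqref{eq:hrep} to be valid, the boundary term $e^{-\lambda T}\tilde h(s_T(x))$ must vanish as $T\to\infty$, which via the quadratic bound on $\tilde h$ requires $|\operatorname{Re}(\lambda)| < 2\mu$. This is automatic for the slowest stable eigenvalue, but for the faster stable eigenvalues it fails; to cover those, one must either impose matching higher-order Taylor vanishing of $h$ at $x_e$ (compatible with the principal eigenfunction construction in \cite{mezic2020spectrum}) or truncate the characteristic integral at a finite time $T$ and absorb the boundary term into $C_0, C_0'$ via the Taylor remainder of $h$ at $x_e$. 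In saddle configurations one further has to split $\Omega$ along the local stable/unstable manifolds and apply forward characteristics on one piece and backward characteristics on the other. Once this resonance bookkeeping is settled, the Jacobian and differentiation-under-the-integral estimates are routine.
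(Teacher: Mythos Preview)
Your route is quite different from the paper's. The paper argues abstractly: it asserts that $D$ is injective (invoking Hartman--Grobman near $x_e$), that $D$ is surjective onto smooth right-hand sides (invoking ``elliptic regularity,'' which is an odd appeal since $D$ is a first-order transport operator), and then applies the Banach Inverse Mapping Theorem to get a bounded inverse, finishing with Sobolev embedding. You instead construct the inverse explicitly by integrating along characteristics and read off regularity and stability from the representation formula. Your approach is more concrete and makes the role of hyperbolicity explicit; it also surfaces the resonance constraint $|\operatorname{Re}\lambda|<2\mu$ that the paper's functional-analytic packaging hides.

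There is, however, a genuine gap in the saddle case that your sketch does not close. Formula~\eqref{eq:hrep} requires the forward orbit $\{s_s(x):s\ge 0\}$ to remain in the domain and converge to $x_e$; your proposed fix of ``splitting $\Omega$ along the local stable/unstable manifolds and applying forward characteristics on one piece and backward on the other'' does not work, because for a saddle a \emph{generic} point of $\Omega$ lies on neither manifold and its orbit converges to $x_e$ in neither time direction---it simply leaves $\Omega$ in finite time both forward and backward. Consequently the boundary term does not vanish, the change of variables $y=s_s(x)$ in your $L_p$ estimate fails (since $s_s(\Omega)\not\subset\Omega'$ uniformly in $s$), and you are left with a finite-time characteristic formula terminating on $\partial\Omega$ with boundary data you do not control. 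Your argument as written is clean only when $x_e$ is a sink (or a source, by time reversal) and $\Omega$ lies in its basin; the theorem as stated covers all hyperbolic equilibria, so the saddle case needs a genuinely different mechanism. A secondary issue: your upgrade from $h\in C^m$ to $h\in W_2^{m+1}$ via the relation $\nabla h\cdot f=\lambda h - w^\top G$ only controls the \emph{directional} derivative along $f$, which degenerates at $x_e$ where $f(x_e)=0$, so the claimed extra order of Sobolev regularity does not follow from ``standard first-order transport regularity'' alone.
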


\begin{proof}
We first observe that the nonlinear term \( G(x) = f(x) - E x \), where \( E = \left.\frac{\partial f}{\partial x}\right|_{x_e} \), has smoothness \( G(x) \in \mathcal{C}^m(\Omega, \mathbb{R}^d) \).

\textbf{Step 1: Injectivity of \( D \)}

Since \( x_e \) is hyperbolic, the Jacobian matrix \( E \) has no eigenvalues with zero real parts. This implies that the linearized system around \( x_e \) has no neutral modes, and thus the operator \( D \) associated with the PDE has no non-trivial solutions to the homogeneous equation \( D h = 0 \).

By the Hartman-Grobman theorem, the behavior of the nonlinear system near \( x_e \) is qualitatively similar to its linearization. Therefore, the vector field \( f(x) \) can be approximated by \( f(x) \approx E(x - x_e) \) near \( x_e \), and the operator \( D \) reduces to a linear problem in this neighborhood. The only solution to \( D h = 0 \) near \( x_e \) is the trivial solution \( h(x) = 0 \).

Thus, \( D \) is injective: 

\[
\ker(D) = \{ h \in W^{m+1}_2(\Omega) \mid D h = 0 \} = \{0\}.
\]

    \textbf{Step 2: Surjectivity of \( D \)}

To prove surjectivity, consider the equation:

\[
D h(x) = - \mathbf{w}^\top G(x),
\]
where \( G(x) \in \mathcal{C}^m(\Omega) \). The solution to this equation is governed by  regularity theory of Elliptic PDEs. Specifically, for smooth forcing terms \( G(x) \), elliptic regularity guarantees that the operator \( D \) will have a solution in the Sobolev space \( W^{m+1}_2(\Omega) \), i.e., \( h(x) \in W^{m+1}_2(\Omega) \). 

Elliptic regularity theory ensures that if the right-hand side \( -\mathbf{w}^\top G(x) \) is sufficiently smooth, the solution \( h(x) \) will also be smooth and lie in \( W^{m+1}_2(\Omega) \). Therefore, \( D \) is surjective, meaning that for any smooth \( G(x) \), there exists a solution \( h(x) \in W^{m+1}_2(\Omega) \).

    \textbf{Step 3: Bounded Invertibility via the Banach Inverse Mapping Theorem}

    Since \( D \) is a bounded linear operator (under the smoothness and boundedness assumptions on \( f(x) \) and \( \lambda \)) that is both injective and surjective, it is a bounded linear bijection between \( W^{m+1}_2(\Omega) \) and \( L_q(\Omega) \).

    By the Banach Inverse Mapping Theorem, the inverse operator \( D^{-1} \) exists and is bounded. Therefore, there exists a constant \( C_D > 0 \) such that:
    \[
    \| h \|_{W^{m+1}_2(\Omega)} \leq C_D \| D h \|_{L_q(\Omega)}
    \]

   \textbf{Step 4: Stability Bound for \( h(x) \)}

The solution \( h(x) \) lies in the Sobolev space \( W^{m+1}_2(\Omega) \), and we apply Sobolev embedding theorems to obtain a bound for \( h(x) \) in \( L_p(\Omega) \). Sobolev embedding states that for an embedding \( W^{m+1}_2(\Omega) \subset L_p(\Omega) \), the solution \( h(x) \) will be in \( L_p(\Omega) \) for some \( p \), provided that \( m \) is large enough. 

The stability bound is expressed as:

\[
\| h \|_{L_p(\Omega)} \leq C_D \| D h \|_{L_q(\Omega)} + C_0 |h(x_e)| + C^{\prime}_0 \| \nabla h(x_e) \|_{\ell^r}.
\]

This bound reflects the control over \( h(x) \) both in terms of the forcing term \( G(x) \) (through the \( L_q \)-norm of \( D h(x) \)) and the boundary conditions \( h(x_e) \) and \( \nabla h(x_e) \).

\end{proof}

To formulate the following convergence result we make use of the fill distance $\rho_{Z,\Omega}$ of the finite set $Z\subset\Omega$, given by
\begin{equation*}
    \rho_{Z,\Omega}:=\sup_{x\in\Omega}\min_{z\in Z}\|x-z\|_2.
\end{equation*}
\begin{theorem}\label{th:error}
Assume that Theorem~\ref{theorem_stability_validity} holds with constants $m>d/2$, $p,q\in[1,\infty]$, $C_D>0$, and let $h_{\lambda}$ be the solution of the PDE~\eqref{linear_nonlinear_eig2}.
Assume furthermore that $\mathcal H\hookrightarrow W_2^{m+1}(\Omega)$, and let $h_\lambda^*$ be the solution of the corresponding optimization problem~\eqref{optproblem} with collocation points $Z\subset \Omega$ and with no regularization (i.e., $\eta=0$ in~\eqref{eq:h_represenation}).

Then there are constants $\rho,C>0$ depending on $d, m, \Omega, p, q$, but not on $\lambda, f, Z, h_\lambda$, such that if $\rho_{Z,\Omega}< \rho_0$ it holds
\begin{equation*}
\norm{L_p(\Omega)}{h_\lambda - h_\lambda^*} 
\leq C \left(\norm{W_2^m(\Omega,\RR^d)}{f} + |\lambda|\right) \rho_{Z,\Omega}^{m - d\left(\frac12 -\frac1q\right)_+}\norm{K}{h_\lambda},
\end{equation*}
where $(x)_+\coloneqq \max(x, 0)$.
\end{theorem}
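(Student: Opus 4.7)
The plan is to reduce the $L_p$ error of $e := h_\lambda - h_\lambda^*$ to a bound on the $L_q$-residual $\|De\|_{L_q(\Omega)}$ via the stability estimate of Theorem~\ref{theorem_stability_validity}, then control the residual by a sampling inequality because $De$ vanishes at the collocation points, and finally absorb the $W_2^{m+1}$-norm of $e$ into $\|h_\lambda\|_K$ through the RKHS embedding together with the variational optimality of $h_\lambda^*$.

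\textbf{Step 1: Stability on the error.} By construction, both $h_\lambda$ and $h_\lambda^*$ satisfy the boundary conditions $h(x_e)=0$ and $\nabla h(x_e)=0$, so $e(x_e)=0$ and $\nabla e(x_e)=0$. Applying~\eqref{stability_bound} from Theorem~\ref{theorem_stability_validity} to $e$ kills the boundary terms and yields
\begin{equation*}
\|e\|_{L_p(\Omega)}\le C_D \|De\|_{L_q(\Omega)}.
\end{equation*}

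\textbf{Step 2: Vanishing of the residual on $Z$.} For every collocation point $z_i\in Z$, both $h_\lambda$ and $h_\lambda^*$ satisfy the PDE identity $Dh(z_i) = -\mathbf{w}_k^\top G(z_i)$, the former because it is the exact solution, the latter by the interpolation constraint in~\eqref{optproblem}. Therefore $De(z_i)=0$ for all $z_i\in Z$.

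\textbf{Step 3: Sampling inequality.} Since $\mathcal H\hookrightarrow W_2^{m+1}(\Omega)$, we have $e\in W_2^{m+1}(\Omega)$ and, because $f\in W_2^m(\Omega,\RR^d)$ with $m>d/2$ (so $W_2^m(\Omega)$ is a Banach algebra by Sobolev embedding), the residual $De = \nabla e\cdot f - \lambda e$ lies in $W_2^m(\Omega)$ with
\begin{equation*}
\|De\|_{W_2^m(\Omega)} \le C_1 \bigl(\|f\|_{W_2^m(\Omega,\RR^d)} + |\lambda|\bigr) \|e\|_{W_2^{m+1}(\Omega)}.
\end{equation*}
A standard sampling inequality for $W_2^m$-functions (e.g.\ Narcowich--Ward--Wendland / Rieger--Zwicknagl) applied to $De$, which vanishes on $Z$, gives: there exist $\rho_0,C_2>0$ depending only on $d,m,\Omega,q$ such that if $\rho_{Z,\Omega}<\rho_0$ then
\begin{equation*}
\|De\|_{L_q(\Omega)} \le C_2 \, \rho_{Z,\Omega}^{\,m - d(1/2 - 1/q)_+}\, \|De\|_{W_2^m(\Omega)}.
\end{equation*}

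\textbf{Step 4: Bounding $\|e\|_{W_2^{m+1}}$ by $\|h_\lambda\|_K$.} Using the continuous embedding $\mathcal H\hookrightarrow W_2^{m+1}(\Omega)$ with embedding constant $C_3$, and the triangle inequality,
\begin{equation*}
\|e\|_{W_2^{m+1}(\Omega)}\le C_3\bigl(\|h_\lambda\|_K + \|h_\lambda^*\|_K\bigr).
\end{equation*}
Since $h_\lambda$ itself is admissible for the minimization problem~\eqref{optproblem} (it satisfies the boundary conditions and the PDE exactly, hence also at every $z_i$), the minimality of $h_\lambda^*$ gives $\|h_\lambda^*\|_K\le \|h_\lambda\|_K$, and therefore $\|e\|_{W_2^{m+1}(\Omega)} \le 2C_3 \|h_\lambda\|_K$.

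Chaining Steps 1--4 produces the claimed bound with $C := 2 C_D C_1 C_2 C_3$. The main technical obstacle is Step~3: one must invoke a sampling inequality in the right Sobolev scale and on a Lipschitz domain $\Omega$, with the correct exponent $m-d(1/2-1/q)_+$ and the fill-distance threshold $\rho_0$. The only other mildly delicate point is verifying that $W_2^m(\Omega)$ is an algebra, which is why the hypothesis $m>d/2$ is needed; this also provides the pointwise boundedness of $\nabla e$ implicitly used when multiplying by $f$.
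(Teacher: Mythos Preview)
Your proposal is correct and follows essentially the same route as the paper: stability bound from Theorem~\ref{theorem_stability_validity} applied to $e=h_\lambda-h_\lambda^*$ (boundary terms vanish), the Narcowich--Ward--Wendland sampling inequality for $De\in W_2^m(\Omega)$ vanishing on $Z$, and the RKHS embedding plus variational optimality to bound $\|e\|_{W_2^{m+1}}$ by $2C_e\|h_\lambda\|_K$. The only cosmetic difference is that you justify the product estimate $\|De\|_{W_2^m}\le C(\|f\|_{W_2^m}+|\lambda|)\|e\|_{W_2^{m+1}}$ via the Banach-algebra property of $W_2^m$ for $m>d/2$, whereas the paper writes out the Leibniz expansion directly; your route is arguably cleaner.
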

\begin{proof}
We first show that $h\in W_2^{m+1}(\Omega)$ implies that $Dh\in W_2^m(\Omega)$, and give an estimate on its norm.
Namely, for any $\alpha\in\NN_0^d$ with $|\alpha|\leq m$ we have
\begin{align*}
\partial^{\alpha}D h(z)
&=\partial^{\alpha} \left(f(z)^T \nabla h(z) - \lambda h(z)\right)\\
&=\sum_{i=1}^d \left(\partial^{\alpha} f_i(z)\right) (\nabla h(z))_i + \sum_{i=1}^d f_i(z) \partial^\alpha(\nabla h(z))_i- \lambda \partial^{\alpha} h(z),
\end{align*}
and thus there is a constant $C_d$ depending on $d$ such that
\begin{align*}
\norm{L_2}{\partial^{\alpha}D h(z)}
&\leq d \max_{1\leq i\leq d}\left(\norm{L_2}{\partial^{\alpha} f_i} \norm{L_2}{(\nabla h)_i} + \norm{L_2}{f_i}  \norm{L_2}{\partial^\alpha(\nabla 
h)_i}\right) + |\lambda| \norm{L_2}{\partial^{\alpha} h}\\
&\leq C_d \left(\norm{W_2^m(\Omega,\RR^d)}{f} + |\lambda|\right)\norm{W_2^{m+1}}{h}.
\end{align*}
This in turns implies that for some constant $C_{d, m}>0$ depending on $d$ and $m$ we have 
\begin{equation}\label{eq:norm_dh_vs_norm_h}
\norm{W_2^m}{D h} 
=\bigg(\sum_{|\alpha|\leq m}\lVert \partial^{\alpha}D h\lVert^2_{L_2}\bigg)^{1/2}
\leq C_{d,m} \left(\norm{W_2^m(\Omega,\RR^d)}{f} + |\lambda|\right)\norm{W_2^{m+1}}{h}.
\end{equation}
This proves in particular that $D h^*_{\lambda}\in W_2^m(\Omega)$ since $h^*_\lambda\in\mathcal H\hookrightarrow W_2^{m+1}$, while $D h_\lambda\in W_2^{m+1}(\Omega)$ by assumption and by Theorem~\ref{theorem_stability_validity}.

Since $m>d/2$, we may now use the zero lemma of Theorem 12 in~\cite{Narcowich2004}, stating that there are $\rho_0, C>0$, depending on $\Omega, p, q$, such that if $\rho_{Z,\Omega}<\rho_0$ it holds
\begin{equation}\label{eq:sampling}
\norm{L_q(\Omega)}{D h_\lambda - D h_\lambda^*}
\leq C \rho_{Z,\Omega}^{m - d\left(\frac12 -\frac1q\right)_+}\norm{W_2^m(\Omega)}{D h_\lambda - D h_\lambda^*},
\end{equation}
and using the linearity of $D$ and the bound~\eqref{eq:norm_dh_vs_norm_h} this gives
\begin{equation}\label{eq:intermediate_bound}
\norm{L_q}{D h_\lambda - D h_\lambda^*}
\leq C C_{d,m} \left(\norm{W_2^m(\Omega,\RR^d)}{f} + |\lambda|\right)\rho_{Z,\Omega}^{m - d\left(\frac12 -\frac1q\right)_+}\norm{W_2^{m+1}}{h_\lambda - h_\lambda^*}.
\end{equation}
We conclude by bounding the norm in the right-hand side. Since $h_\lambda$ satisfies the constraints of the problem~\eqref{optproblem}, and $h^*_\lambda$ is its minimal norm solution, we have $\norm{K}{h^*_\lambda}\leq \norm{K}{h_\lambda}$. Furthermore, by assumption there is $C_e>0$ such that $\norm{W_2^{m+1}}{h}\leq C_e\norm{K}{h}$ for all $h\in \mathcal H$. It follows that 
\begin{equation*}
\norm{W_2^{m+1}}{h_\lambda - h^*_\lambda}
\leq C_e\norm{K}{h_\lambda - h^*_\lambda}
\leq C_e\left(\norm{K}{h_\lambda} + \norm{K}{h^*_\lambda}\right)
\leq 2 C_e \norm{K}{h_\lambda}, 
\end{equation*}
and thus~\eqref{eq:intermediate_bound} simplifies to
\begin{equation*}
\norm{L_q}{D h_\lambda - D h_\lambda^*}
\leq 2 C C_{d, m} C_e \left(\norm{W_2^m(\Omega,\RR^d)}{f} + |\lambda|\right)\rho_{Z,\Omega}^{m - d\left(\frac12 -\frac1q\right)_+}\norm{K}{h_\lambda}.
\end{equation*}
Inserting this inequality in the stability bound~\eqref{stability_bound} gives the result with an appropriate constant $C>0$, since both $h_\lambda$ and $h^*_\lambda$ satisfy the boundary conditions~\eqref{boundary_conditions}.
\end{proof}
\begin{remark}
We point out that in Theorem~\ref{th:error} the factor $(1/2-1/q)_+$ is decreasing in $q$ and vanishes for $1\leq q\leq 2$. It follows that, given a target $p$, one should choose the smallest $q$ such that~\eqref{stability_bound} holds.  
\end{remark}

\section{Examples}







\subsection{First Analytical Example}

Consider the following dynamical system with an equilibrium point at the origin.

\begin{equation}
\dot x = 
 \left[ \begin{array}{c}
 -2\lambda_2 x_2(x^2_1 - x_2 - 2x_1 x^2_2 + x^4_2 ) + \lambda_1 (x_1 + 4 x^2_1 x_2 - x^2_2 - 8x_1 x^3_2 + 4x^5_2 )
 \\  2\lambda_1 (x_1 - x^2_2 )^2 - \lambda_2(x^2_1 -x_2 - 2x_1 x^2_2 + x^4_2 )
\end{array}\right].
\end{equation}

The eigenvalues of the linearization of the system at the origin i.e, $E$ are $\lambda_1 = -1$ and $\lambda_2 = 3$. For this example, the principal eigenfunctions can be computed analytically and are as follows:

\begin{gather*}
\phi_{\lambda_1}(x)= x_1-x^2_2,\quad  \lambda_1 = -1 \text{ and}\\
\phi_{\lambda_2}(x)= -x_1^2+x_2+2x_1x^2_2-x^4_2,\quad \lambda_2 = 3.
\end{gather*}
Using 3600 points over the square $[-1,1] \times [-1,1]$, we apply kernel regression with the 2D Gaussian kernel 

\begin{equation}
    K(x_1,x_2;\sigma,\sigma_2) = \exp \Big(-\frac{(x_1-y_1)^2}{2\sigma^2_1}-\frac{(x_2-y_2)^2}{2\sigma^2_2}\Big)
\end{equation}
to learn the eigenfunction $\phi_{\lambda_k}$ and depict the results in (\ref{plot1a}) and (\ref{plot1b}). For $\phi_{\lambda_1}$, we take $\sigma_1=\sigma_2=2$; for $\phi_{\lambda_2}$, we use $\sigma_1=2$ and $\sigma_2=3$. The pointwise relative error is very low with the exception of the points where the true solution is equal to zero.

\begin{figure}[tbh]
\centering
\subfloat{\label{fig:test1}
 \includegraphics[width=.31\textwidth]{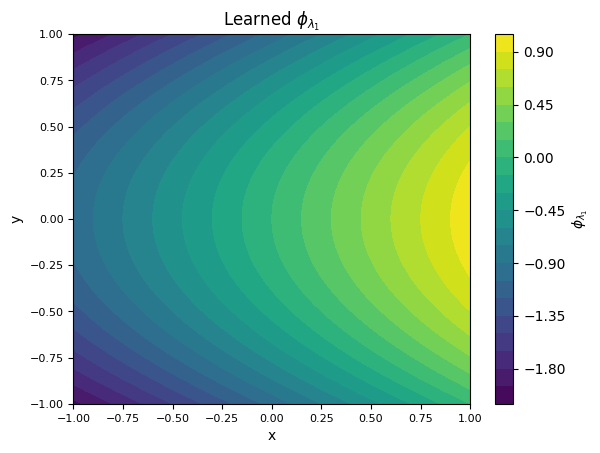}
  }
\hfill
\subfloat{\label{fig:test2}
\includegraphics[width=.31\textwidth]{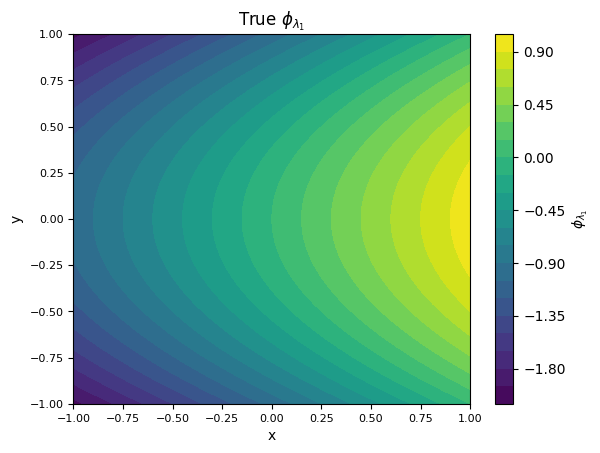}
}
\hfill
\subfloat{\label{fig:error1}
\includegraphics[width=.31\textwidth]{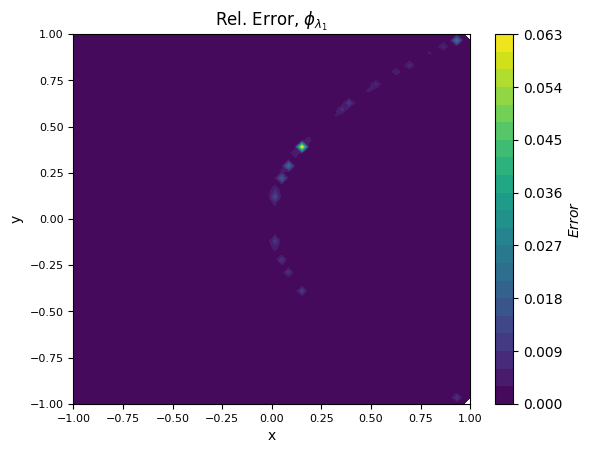}
}
\hfill
\caption{Learned $\phi^*_{\lambda_1}$ (left), true $\phi_{\lambda_1}$ (center) and relative error (right)}
\label{plot1a}
\end{figure}

\begin{figure}[tbh]
\centering
\subfloat{\label{fig:test3}
\includegraphics[width=.31\textwidth]{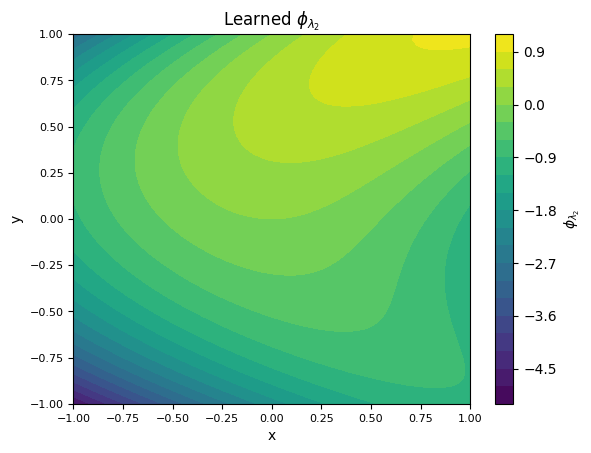}
}
\hfill
\subfloat{\label{fig:test4}
\includegraphics[width=.31\textwidth]{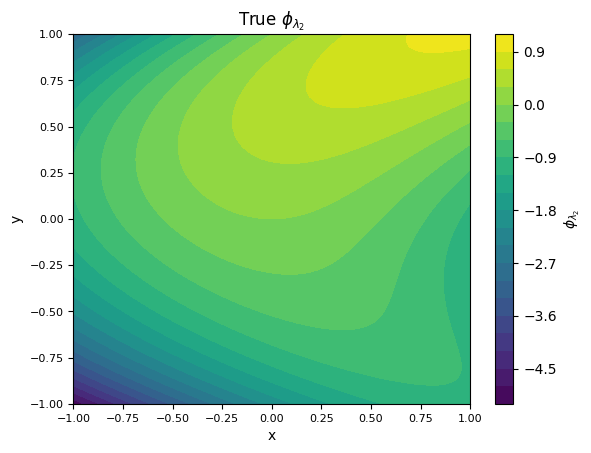}
}
\hfill
\subfloat{\label{fig:error2}
\includegraphics[width=.31\textwidth]{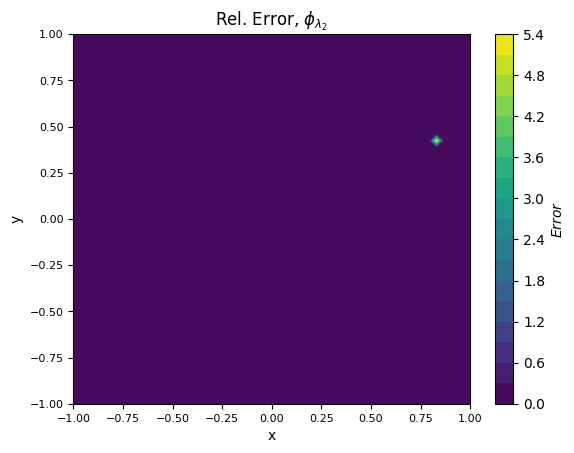}
}
\caption{Learned $\phi^*_{\lambda_2}$ (left), true $\phi_{\lambda_2}$ (center) and relative error (right)}
\label{plot1b}
\end{figure}

\subsection{Second Analytical Example}
\begin{equation}
\dot{x} = 
 \left[ \begin{array}{c}
 \frac{\left(7.5 x_2^2+5.0\right)\left(x_1^3+x_1+\sin \left(x_2\right)\right)+\left(-x_1+x_2^3+2 x_2\right) \cos \left(x_2\right)}{9 x_1^2 x_2^2+6 x_1^2+3 x_2^2+\cos \left(x_2\right)+2} \\
\\
\frac{2.5 x_1^3+2.5 x_1-\left(3 x_1^2+1\right)\left(-x_1+x_2^3+2 x_2\right)+2.5 \sin \left(x_2\right)}{9 x_1^2 x_2^2+6 x_1^2+3 x_2^2+\cos \left(x_2\right)+2}
\end{array}\right].
\end{equation}
The vector field for this system is smooth everywhere on $\mathbb{R}^2$, with a saddle point at the origin. One can verify that the two principal eigenpairs of this system are given by
\begin{gather*}
\phi_{\lambda_1}(x)= x_1 - 2x_2 - x_2^3,\quad  \lambda_1 = -1 \text{ and}\\
\phi_{\lambda_2}(x)= x_1 + \sin(x_2) + x_1^3,\quad \lambda_2 = 2.5.
\end{gather*}

We apply the method outlined in Section 3.2 and use the 2D Gaussian kernel to solve the PDE for each eigenvalue over 2500 points in the grid $[1.5,2.5]\times[1.5,2.5]$,

For $\lambda_1$, we set $\sigma_1=\sigma_2=3$ and for $\lambda_2$, we use $\sigma_1=\sigma_2=7$. As shown by Figures \ref{linplot1a} and \ref{linplot2a}, the solution $\phi^*_{\lambda_k}$, where the nonlinear part $h^*_{\lambda_l}$ has been recovered by the Gaussian kernel, is an accurate approximation of the true eigenfunction $\phi_{\lambda_k}$ for both eigenvalues $\lambda_k$.

\begin{figure}[tbh]
\centering
\subfloat{\label{fig:bigtest1}
 \includegraphics[width=.31\textwidth]{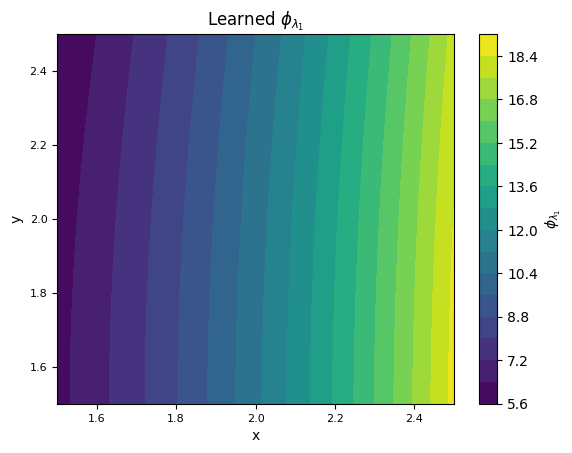}
}
\hfill
\subfloat{\label{fig:bigtest2}
\includegraphics[width=.31\textwidth]{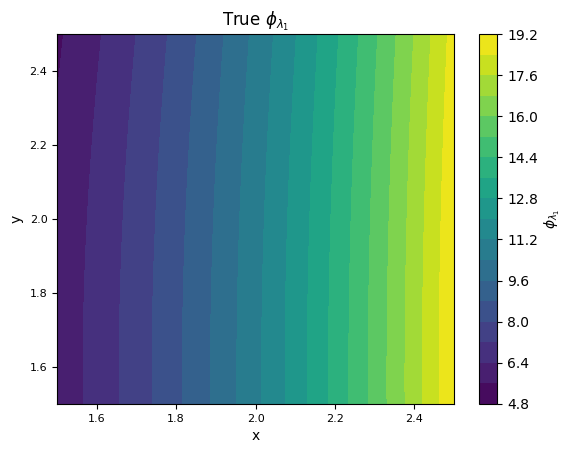}
}
\hfill
\subfloat{\label{fig:bigerror1}
\includegraphics[width=.31\textwidth]{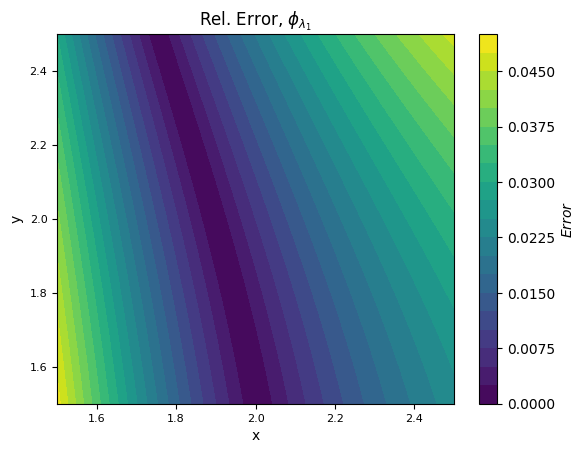}
}
\hfill
\caption{Learned $\phi^*_{\lambda_1}$ (left), true $\phi_{\lambda_1}$ (center) and relative error (right)}
\label{linplot1a}
\end{figure}

\begin{figure}[tbh]
\centering
\subfloat{\label{fig:bigtest3}
\includegraphics[width=.31\textwidth]{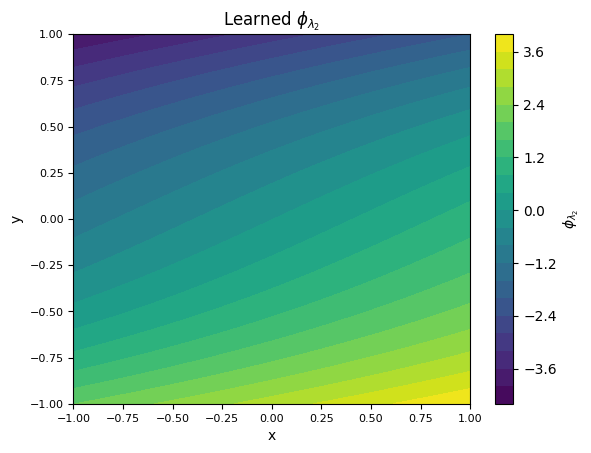}
}
\hfill
\subfloat{\label{fig:bigtest4}
\includegraphics[width=.31\textwidth]{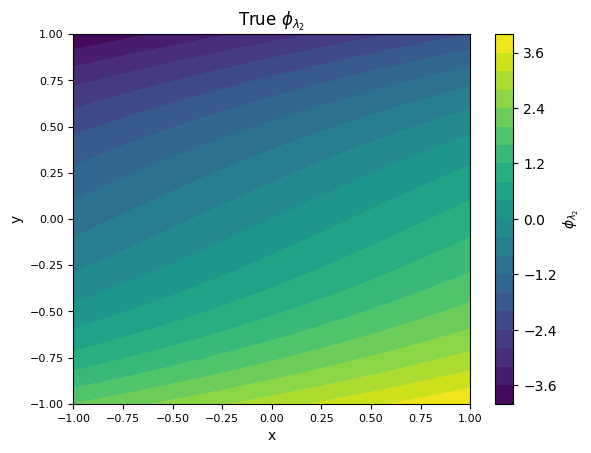}
}
\hfill
\subfloat{\label{fig:bigerror2}
\includegraphics[width=.31\textwidth]{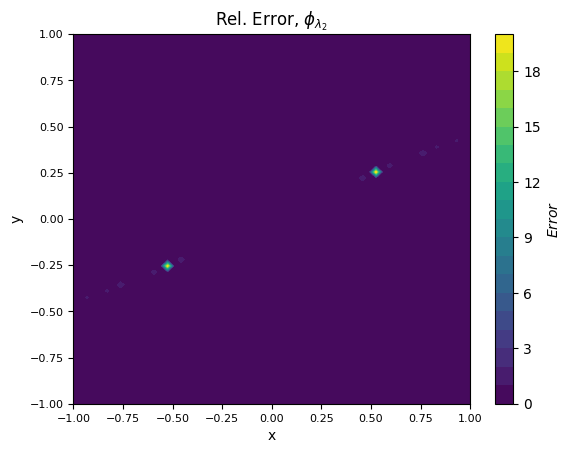}
}
\caption{Learned $\phi^*_{\lambda_2}$ (left), true $\phi_{\lambda_2}$ (center) and relative error (right)}
\label{linplot2a}
\end{figure}


 \subsection{The Duffing Oscillator}


Consider the unforced Duffing oscillator, described by 
\begin{align}
\dot x_1&=x_2\nonumber\\
\dot x_2&=-\delta x_2-x_1(\beta+\alpha x_1^2)
\end{align}
with $\delta=0.5$, $\beta=-1$, and $\alpha=1$,
where $z\in \mathbb{R}$ and $\dot{z}\in \mathbb{R}$ are the scalar position and velocity, respectively. 
The dynamics admit two stable equilibrium points at $(-1,0)$ and $(1,0)$, and one unstable equilibrium point at the origin. In this example, we sample $2500$ points over the domain $[-2,2] \times [-2,2]$ and use the 2D Gaussian kernel with $\sigma_1 = \sigma_2 = 15$. Our results in Figure \ref{duffing} depict $\phi^*_{\lambda_1}$ for $\lambda_1 = \frac{-1+\sqrt{17}}{4}$; this time, we consider only the larger eigenvalue. The plot accurately captures the behavior of the eigenfunction of the Koopman operator corresponding to $\lambda_1$ up to scaling.
 
\begin{figure}[tbh]
\centering
\subfloat{\label{duffing1_new}
\includegraphics[width=0.45\textwidth]{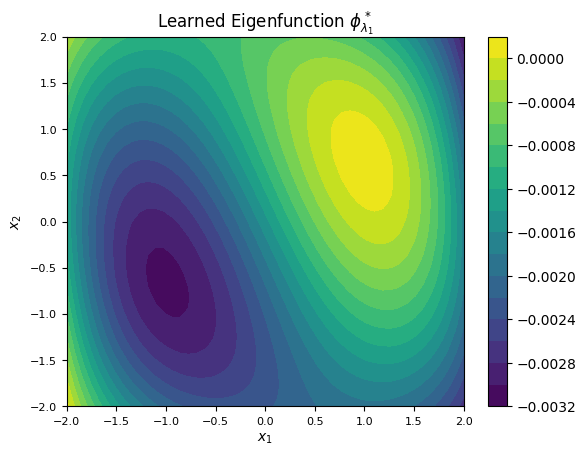}
}
\hfill
 \caption{Learned $\phi^*_{\lambda_1}$}
\label{duffing}
\end{figure}

\subsection{A Three Dimensional Gradient System}
Consider a three-dimensional gradient system of the form
\begin{align}
    \dot x=-\frac{\partial V}{\partial x}, \quad
    x=(x_1\;x_2\;x_3)^\top, 
\end{align}
where the potential function $V$ is given by
\begin{align}
  V(x)=x^\top Px+e^{-(x_1-x_2)^2}, 
\end{align}
where $P= \begin{pmatrix}
0.2&0.1&0.05\\  
0.1&0.3&0.05\\
0.05&0.05&0.2
\end{pmatrix}$ is a positive definite matrix. The system admits an unstable equilibrium at the origin and two stable minima at $ x = (0.90, -0.73, 0.14)$ and $ x = (-0.90,  0.73, 0.14)$. We first compute Jacobian matrix of $-\frac{\partial V}{\partial x}$ at $x=0$ which has three real eigenvalues, i.e., $3.70$, $-0.29$, and $-0.81$.
To construct $\phi_\lambda(x)$, we solve \eqref{optproblem} over the domain $[-2,2] \times [-2,2] \times [-2,2]$ by sampling $3379$ points and use the Gaussian kernel 
$K(x_1,x_2) = \exp\left(-\frac{(x_1-x_2)^2}{2 \times 1.1 ^2} \right)$.
To visualize the result, we plot the level curve of $\phi_\lambda(x)$ with the unstable eigenvalue $\lambda = 3.70$ with a fixed $z=0.57$. As shown in Figure \ref{3D_gradient}, the level curve reveals two regions of attraction centered at the local minima of $V(x)$, i.e.,  $x_1,x_2$.


\begin{figure}[tbh]
\centering
\subfloat{
\includegraphics[width=0.45\textwidth]{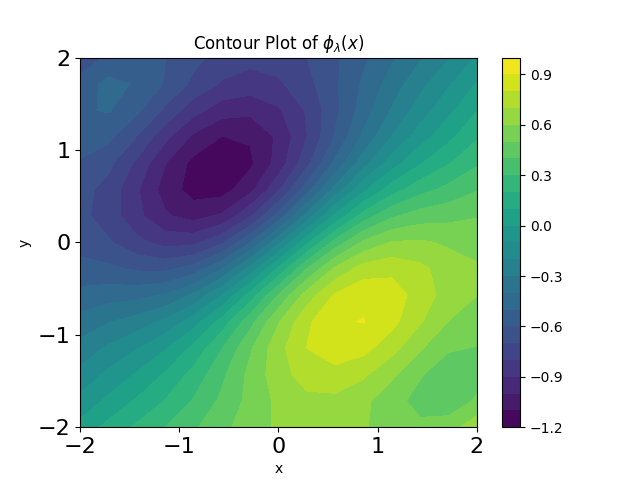}
}
\hfill
 \caption{Learned $\phi^*_{\lambda_1}$}
\label{3D_gradient}
\end{figure}


\section{Code}
The code for this paper can be found at https://github.com/jonghyeon1998/Koopman



\section{Conclusion}
In this paper, we have introduced a novel kernel-based method for approximating the principal eigenfunctions of the Koopman operator, offering a computationally efficient alternative to directly calculating the operator itself. By leveraging the decomposition of the eigenfunctions into linear and nonlinear components, we have shown that the linear part corresponds to the local linear dynamics of the system, while the nonlinear part can be obtained through kernel methods.  

Our approach not only makes the computation of Koopman eigenfunctions more tractable but also enhances the ability to analyze complex dynamical systems by providing valuable insights into their long-term behavior. Overall, the kernel-based method for the construction of eigenfunctions of the Koopman operator presented here represents a significant step toward bridging the gap between linear and nonlinear dynamics within a rigorous mathematical framework.

\section*{Acknowledgement}
GS is a member of INdAM-GNCS, and his work was partially supported by the project ``Data-driven discovery and control of multi-scale interacting artificial agent system'' funded by the program Next-GenerationEU - National Recovery and Resilience Plan (NRRP) -- CUP H53D23008920001. HO and JL acknowledge support from the Air Force Office of Scientific Research under MURI award number FA9550-20-1-0358 (Machine Learning and Physics-Based Modeling and Simulation) and the Department of Energy under the MMICCs SEA-CROGS award.  BH acknowledges support from NSF EPCN-2031570, the Air Force Office of Scientific Research (award number  FA9550-21-1-0317) and the Department of Energy (award number SA22-0052-S001). HO is grateful for support from a Department of Defense Vannevar Bush Faculty
Fellowship.
\bibliographystyle{IEEEtran}
\bibliography{ref, missing_dyn, references_p4}
\end{document}